%%%%%%%%%%%%%%%%%%%%%%%%%%%%%%%%%%%%%%%%%%%%%%%%%%%%%%%%%%%%%%%%%%%%%%%%
%
% HOW TO LATEX: first use ``latex khuri-makdisi-workshop2.tex''
% then apply ``dvipdfm khuri-makdisi-workshop2''
% The bibtex and image files are also attached.
%
%%%%%%%%%%%%%%%%%%%%%%%%%%%%%%%%%%%%%%%%%%%%%%%%%%%%%%%%%%%%%%%%%%%%%%%%
%-----------------------------------------------------------------------
% Beginning of article-template.tex
%-----------------------------------------------------------------------
%
%    This is a template file for proceedings articles prepared with AMS
%    author packages, for use with AMS-LaTeX.
%
%    Templates for various common text, math and figure elements are
%    given following the \end{document} line.
%
%%%%%%%%%%%%%%%%%%%%%%%%%%%%%%%%%%%%%%%%%%%%%%%%%%%%%%%%%%%%%%%%%%%%%%%%

%    Remove any commented or uncommented macros you do not use.

%    Replace amsproc by the name of the author package.
\documentclass{conm-p-l}

%    If you need symbols beyond the basic set, uncomment this command.
\usepackage{amssymb}

%    If your article includes graphics, uncomment this command.
\usepackage{graphicx}

%    If the article includes commutative diagrams, ...
%\usepackage[cmtip,all]{xy}

%    Include other referenced packages here.
%\usepackage{}

%    Update the information and uncomment if AMS is not the copyright
%    holder.
%\copyrightinfo{2009}{American Mathematical Society}

\newtheorem{theorem}{Theorem}[section]

\newtheorem{proposition}[theorem]{Proposition}

\theoremstyle{definition}

%% I commented out the following line so that remarks are 
%% not italicized, but treated like definitions

%\theoremstyle{remark}
\newtheorem{remark}[theorem]{Remark}

\numberwithin{equation}{section}

%%%%%%%%%%%%%%%%%%%%%%%%
% Personal macros
%%%%%%%%%%%%%%%%%%%%%%%%

%\newcommand\qqq{\textbf{[??]}} % Comment out to find the places with questions

\newcommand\C{\mathbf{C}} 
\newcommand\R{\mathbf{R}}
\newcommand\Q{\mathbf{Q}}
\newcommand\Z{\mathbf{Z}}
 % algebraic closure of Q
%\newcommand\q{q} % size of finite field F_q

%\newcommand\Fq{\F_\q}
%\newcommand\Fql{\mathbf{F}_{\q^\ell}}

\newcommand\stwomatr[4]{\left(\begin{smallmatrix}#1&#2\\  % in text
                               #3&#4 \end{smallmatrix}\right)}

\newcommand\abs[1]{{\left|#1\right|}}

%\DeclareMathOperator{\kernel}{ker}   %%% \ker is already defined

 % Frobenius element

%\DeclareMathOperator{\Proj}{{\bf Proj}}
 %projective space

\newcommand\cuspforms{\mathcal{S}}

 % ring of all modular forms

 % upper half plane
\newcommand\GammaN{{\Gamma(N)}} % congruence subgroup
 % line bundle

%%% apologies to any reader of these macros for the change in notation.
\newcommand\AAA{p}
\newcommand\BBB{q}
\newcommand\CCC{r}
\newcommand\ppp{\alpha}
\newcommand\qqq{\beta}
\newcommand\rrr{\gamma}

\begin{document}

% \title[short text for running head]{full title}
\title
%[Fake proofs for identities involving products of Eisenstein series]
{Fake proofs for identities involving products of Eisenstein series}

%    Only \author and \address are required; other information is
%    optional.  Remove any unused author tags.

%    author one information
% \author[short version for running head]{name for top of paper}
\author{Kamal Khuri-Makdisi}
\address{Mathematics Department, 
American University of Beirut, Bliss Street, Beirut, Lebanon}
\email{kmakdisi@aub.edu.lb}
\thanks{August 27, 2018}

%    author two information
%\author{}
%\address{}
%\curraddr{}
%\email{}
%\thanks{}

\subjclass[2000]{Primary 11F11, 33E05}
%    The 2010 edition of the Mathematics Subject Classification is
%    now available.  If you are citing a classification from the
%    new scheme, use the following input coding instead.
%\subjclass[2010]{Primary }

%% appears to have no effect
%\date{December 9, 2016}

\begin{abstract}
In the workshop of the July 2016 Building Bridges 3 conference in
Sarajevo, I presented the results from a joint article with W. Raji
(Mathematische Annalen, 2017).  That article gave a proof of
various linear relations between products of two Eisenstein series on
$\GammaN$, including an interesting identity related to the action of
a Hecke operator on such a product.  The real proofs involve some care
to deal with issues of convergence.  In this note we give ``fake''
proofs for these identities, ignoring the convergence issues; some of
these fake proofs appeared in the workshop lecture as an amusing side
note before I sketched the real proofs.  Something in these fake
proofs is quite suggestive, even though the proofs themselves are
clearly invalid (and even produce wrong results).  It would be
interesting to understand what exactly is going on here.
\end{abstract}

\maketitle

%    Text of article.

\section{Introduction}
\label{section1}

The basic object of study in this note is the Eisenstein series of
weight $\ell \geq 1$ on $\GammaN$, with parameter $\lambda \in
N^{-1}\Z^2 / \Z^2 \subset \Q^2 / \Z^2$:
\begin{equation}
\label{equation1.1}
E_{\ell,\lambda}(z) = 
  \sum_{\substack{(a,b) \equiv \lambda \> (\text{mod}\> \Z^2)\\
                   (a,b) \neq (0,0)\\
                  }}
   (az+b)^{-\ell}.
\end{equation}
The above is not quite right if $\ell \in \{1,2\}$, as the above
series does not converge.  In that case, one evaluates the sum,
following Hecke, by replacing $(az+b)^{-\ell}$ by
$(az+b)^{-\ell} \abs{az+b}^{-s}$ for a complex parameter $s$, and then
setting $s=0$ after one has analytically continued the resulting sum in
$s$.  In the convergent case, when $\ell \geq 3$,
the Eisenstein series is a holomorphic function of $z$,  and this
holomorphy fortuitously still holds for $\ell = 1$; however, for
$\ell=2$, Hecke's summation procedure yields that $E_{2,\lambda}(z)$
is the sum of a nonholomorphic expression $-\pi/\text{Im } z$ (which
is the same for all $\lambda$) with a holomorphic function (which of
course depends on $\lambda$).

The product $E_{\ell,\lambda}E_{m,\mu}$ of two of these Eisenstein
series is then a form of weight~$\ell+m$ on $\GammaN$.  The
articles~\cite{BG2,BG3} prove a number of linear relations between such
products --- more precisely, they show that certain linear combinations 
of such products belong to the space $Eis_{\ell+m}$ of Eisenstein series of
weight~$\ell+m$.  These linear relations modulo~$Eis_{\ell+m}$ have
a structure that is reminiscent of the Manin relations between
periods of cusp forms; this was further codified in~\cite{Pasol}.
For example, in weight~$2$, we have the following relations:
\begin{equation}
\label{equation1.2}
\begin{split}
E_{1,\lambda} E_{1,\mu} + E_{1,\mu}E_{1,-\lambda} &= 0,\\
E_{1,\lambda} E_{1,\mu} + E_{1,\mu}E_{1,-\lambda-\mu} +
E_{1,-\lambda-\mu}E_{1,\lambda} &\equiv 0  \pmod{Eis_2}.\\
\end{split}
\end{equation}
The first identity, which is very simple, is analogous to the two-term
Manin relation involving $\stwomatr{0}{-1}{1}{0}$, while the second,
more interesting, identity, is
analogous to the three-term Manin relation involving
$\stwomatr{0}{1}{-1}{-1}$.  The explanation for this parallelism can
be found in~\cite{KKMWR}, where we show that the Petersson
inner product of any cusp form $f \in \cuspforms_2$ with
$E_{1,\lambda} E_{1,\mu} + E_{1,\mu} E_{1,-\lambda-\mu}
        + E_{1,-\lambda-\mu} E_{1,\lambda}$ 
expands to a combination of periods of $f$ and its transforms $f|_2 M$
for certain $M \in GL_2^+(\Q)$, and this combination of periods
vanishes precisely by the Manin relations.  The proof in that article,
which works for a similar identity in arbitrary weight, involves
carrying throughout the parameter $s$ in the Eisenstein series
$E(z,s)$, and controlling the analysis fairly carefully.  Our goal in
this note is to provide fake proofs of that and other results
from~\cite{KKMWR}, relying on intriguing identities between rational
functions, but with no attention paid to convergence.  It would be
interesting to find the connection between these intriguing identities
and the structure of the Manin relations, and to see what parts of the
fake proofs can be salvaged.
%  We thus
% begin with a fake proof of the above result,
% before presenting the general case in Proposition~\ref{proposition2.1}.
\begin{proof}[Fake proof of the second identity in~\eqref{equation1.2}]
Let us write $\nu = -\lambda-\mu$; hence we can assume given a
triple $(\lambda,\mu,\nu)$ for which $\lambda + \mu + \nu = (0,0)$.
Similarly, consider the set $T = T_{(\lambda,\mu,\nu)}$ of all 
triples $((a,b),(c,d),(e,f))\in (\Q^2)^3$ with
\begin{equation}
\label{equation1.3}
\begin{split}
(a,b) &\equiv \lambda \pmod{\Z^2},\\
(c,d) &\equiv \mu \pmod{\Z^2},\\
(e,f) &\equiv \nu \pmod{\Z^2},\\
(a,b) + (c,d) + (e,f) &= (0,0),\\
\text{None of } (a,b), (c,d), \text{ or } (e,f) &\text{ equals } (0,0).\\
\end{split}
\end{equation}
(The set $T$ is nonempty precisely because $\lambda + \mu + \nu =
(0,0)$, and the last condition of nonvanishing only matters if one of
$\lambda, \mu, \nu$ is zero in $\Q^2/\Z^2$.)  Then, ignoring all
issues of convergence, we formally have
\begin{equation}
\label{equation1.4}
E_{1,\lambda} E_{1,\mu}
   \equiv \sum_{((a,b),(c,d),(e,f)) \in T} 
         \frac{1}{az+b}\cdot \frac{1}{cz+d}
\pmod{Eis_2}.
\end{equation}
The reason is that once one chooses $(a,b)$ and $(c,d)$ arbitrarily in
the congruence classes of $\lambda$ and $\mu$, respectively, the pair
$(e,f) = - (a,b) - (c,d)$ is uniquely determined.  In the event that
$(e,f) = (0,0)$, which anyhow only occurs when $\lambda = -\mu$, the pairs
that we omit are those with $(c,d) = (-a,-b)$, which corresponds to
being off by $-E_{2,\lambda}$, which (ignoring its nonholomorphy) ``is''
an element of $Eis_2$.

We obtain similar expressions (modulo $Eis_2$) for
$E_{1,\mu}E_{1,\nu}$ and for $E_{1,\nu}E_{1,\lambda}$.  Thus, working
modulo $Eis_2$, we obtain
\begin{equation}
\label{equation1.5}
\begin{split}
&\qquad
E_{1,\lambda} E_{1,\mu} + E_{1,\mu}E_{1,\nu} + E_{1,\nu}E_{1,\lambda}\\
&\equiv
\sum_{((a,b),(c,d),(e,f)) \in T}
   \left[
     \frac{1}{(az+b)(cz+d)}
   + \frac{1}{(cz+d)(ez+f)}
   + \frac{1}{(ez+f)(az+b)}
   \right],\\
%& = 0.\\
\end{split}
\end{equation}
and this last sum vanishes, thanks to the identity
\begin{equation}
\label{equation1.6}
\AAA+\BBB+\CCC = 0 \quad \implies \quad
 \frac{1}{\AAA\BBB} + \frac{1}{\BBB\CCC} + \frac{1}{\CCC\AAA} = 0,
\end{equation}
with $\AAA = az+b$, $\BBB=cz+d$, and $\CCC=ez+f$.
\end{proof}

It is not apparent to me how to salvage the above fake proof, for
instance by summing over elements of $T$ in a particular order so as
to obtain convergence, in the style of
Eisenstein~\cite{WeilEllipticFns}.  Indeed, 
when $\lambda,\mu,\nu$ are all nonzero, then our fake proof would
imply that $E_{1,\lambda} E_{1,\mu} + E_{1,\mu}E_{1,\nu} +
E_{1,\nu}E_{1,\lambda}$ is actually zero, which is not the case; its
expression as an explicit weight~2 Eisenstein series is known.

\section{The identity in higher weight}
\label{section2}

We now turn to the case of general weight~$k \geq 2$.  The analog of
the two-term Manin relation is again simple, and can be found in
equation~(2.23) of~\cite{KKMWR}.  The interesting three-term relation
in higher weight amounts to the following.

\begin{proposition}[Theorem~2.8 of~\cite{KKMWR}]
\label{proposition2.1}
Let $\lambda,\mu,\nu$ satisfy $\lambda+\mu+\nu=0$, as before.  Let
$\ppp,\qqq,\rrr\in\C$ satisfy $\ppp+\qqq+\rrr=0$ (these can be thought
of as formal variables).  Let $k \geq 2$.  Then the following expression is
orthogonal to all cusp forms $f \in \cuspforms_k$:
\begin{equation}
\label{equation2.1}
%\begin{split}
 %&
\sum_{\substack{\ell+m=k\\ \ell,m \geq 1}}
\ppp^{\ell-1} \qqq^{m-1}
E_{\ell,\lambda} E_{m,\mu}
 %\\
 +
 %&
 \sum_{\substack{\ell+m=k\\ \ell,m \geq 1}}
\qqq^{\ell-1} \rrr^{m-1}
E_{\ell,\mu} E_{m,\nu}
%\\ 
 +
 %&
 \sum_{\substack{\ell+m=k\\ \ell,m \geq 1}}
\rrr^{\ell-1} \ppp^{m-1}
E_{\ell,\nu} E_{m,\lambda}.
%\\
%\end{split}
\end{equation}
Morally speaking, this means that the expression~\eqref{equation2.1}
should belong to $Eis_k$, but the presence of nonholomorphic $E_2$
terms complicates the statement somewhat.
\end{proposition}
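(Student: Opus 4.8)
The plan is to imitate the fake proof of the second identity in~\eqref{equation1.2}, replacing the single rational identity~\eqref{equation1.6} by its weight-$k$ analogue. First I would expand all three products over the same index set $T = T_{(\lambda,\mu,\nu)}$ from~\eqref{equation1.3}. For a triple $((a,b),(c,d),(e,f)) \in T$, write $p = az+b$, $q = cz+d$, $r = ez+f$, so that $p + q + r = 0$; then, ignoring convergence exactly as in~\eqref{equation1.4},
\begin{equation*}
\sum_{\substack{\ell+m=k\\ \ell,m\geq 1}} \alpha^{\ell-1}\beta^{m-1} E_{\ell,\lambda}E_{m,\mu}
\equiv \sum_{((a,b),(c,d),(e,f)) \in T}\ \sum_{\substack{\ell+m=k\\ \ell,m\geq 1}} \alpha^{\ell-1}\beta^{m-1}\frac{1}{p^\ell q^m} \pmod{Eis_k},
\end{equation*}
and similarly for the two cyclic terms. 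It then suffices to show that, for each fixed triple in $T$, the cyclically summed inner expression vanishes pointwise in $z$.

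So the crux is the purely algebraic identity: if $p+q+r=0$ and $\alpha+\beta+\gamma=0$, then
\begin{equation*}
\sum_{\substack{\ell+m=k\\ \ell,m\geq 1}}\left[\alpha^{\ell-1}\beta^{m-1}\frac{1}{p^\ell q^m} + \beta^{\ell-1}\gamma^{m-1}\frac{1}{q^\ell r^m} + \gamma^{\ell-1}\alpha^{m-1}\frac{1}{r^\ell p^m}\right]=0,
\end{equation*}
which for $k=2$ reduces to~\eqref{equation1.6}. To prove it I would evaluate each inner sum in closed form. With $x = \alpha/p$ and $y = \beta/q$, the first inner sum is $\tfrac{1}{\alpha\beta}\sum_{\ell=1}^{k-1}x^\ell y^{k-\ell}$, a finite geometric progression; summing it gives
\begin{equation*}
\sum_{\substack{\ell+m=k\\ \ell,m\geq 1}}\alpha^{\ell-1}\beta^{m-1}\frac{1}{p^\ell q^m} = \frac{(\alpha/p)^{k-1} - (\beta/q)^{k-1}}{\alpha q - \beta p}.
\end{equation*}
The key observation, and the reason the cyclic sum collapses, is that the three denominators coincide: a short computation using $r = -p-q$ and $\gamma = -\alpha-\beta$ shows $\beta r - \gamma q = \gamma p - \alpha r = \alpha q - \beta p =: D$. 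Factoring out $1/D$, the three numerators telescope as $\bigl((\alpha/p)^{k-1}-(\beta/q)^{k-1}\bigr)+\bigl((\beta/q)^{k-1}-(\gamma/r)^{k-1}\bigr)+\bigl((\gamma/r)^{k-1}-(\alpha/p)^{k-1}\bigr)=0$, so the whole expression vanishes for every triple in $T$.

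I expect the genuine obstacle to be the one the whole note is about: none of these rearrangements is legitimate, since the defining sums of the $E_{\ell,\lambda}$ (and the outer sum over $T$) do not converge absolutely, and the weight-$2$ pieces carry the nonholomorphic $-\pi/\Imag z$ correction that the manipulation silently discards. Consequently the argument \emph{overshoots}: it formally concludes that~\eqref{equation2.1} is identically zero, whereas, as already noted for $k=2$, the true statement is only orthogonality to $\cuspforms_k$, i.e.\ membership in the Eisenstein part up to the $E_2$ defect. A rigorous version must therefore reinstate a convergence-forcing device---most naturally Hecke's parameter $s$, carried throughout as in~\cite{KKMWR}---and verify that the telescoping survives the analytic continuation to $s=0$, with the gap between ``zero'' and ``orthogonal to cusp forms'' absorbed by the nonholomorphic and Eisenstein contributions.
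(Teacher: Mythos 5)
Your proposal reproduces the paper's own (avowedly fake) proof essentially verbatim: the same expansion over the triple set $T$, the same closed-form geometric-series evaluation giving $\bigl[(\ppp/\AAA)^{k-1}-(\qqq/\BBB)^{k-1}\bigr]/(\ppp\BBB-\qqq\AAA)$ as in~\eqref{equation2.3}, the same common-denominator identity~\eqref{equation2.4}, and the same telescoping of the three numerators. You also flag exactly the caveats the paper itself flags --- that the formal argument overshoots to ``identically zero'' whereas the true statement is only orthogonality to cusp forms, and that a rigorous proof (as in~\cite{KKMWR}) must carry Hecke's parameter $s$ through the analysis --- so your account matches the paper's treatment in both substance and spirit.
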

\begin{proof}[Fake proof]
We again ignore all issues of convergence, and work modulo ``$Eis_k$''
(ignoring the nonholomorphy coming from any $E_2$). 
Analogously to~\eqref{equation1.4}, we formally write the first term 
$\sum_{\ell+m=k} \ppp^{\ell-1} \qqq^{m-1} E_{\ell,\lambda}(z) E_{m,\mu}(z)$
as the following sum over $((a,b),(c,d),(e,f)) \in T$ and $\ell,m \geq 1$
with $\ell+m=k$:
\begin{equation}
\label{equation2.2}
\sum_{((a,b),(c,d),(e,f))} \sum_{\ell,m}
       \frac{\ppp^{\ell-1} \qqq^{m-1}}{(az+b)^{\ell} (cz+d)^m}
=
\sum_{((a,b),(c,d),(e,f))} \sum_{\ell,m}
       \frac{\ppp^{\ell-1} \qqq^{m-1}}{\AAA^{\ell} \BBB^m},
\end{equation}
using the notation of $\AAA,\BBB,\CCC$ as in~\eqref{equation1.6} and the
sentence that follows it.  Some manipulation with the finite
geometric series over $\ell,m$ then gives us the following congruence
modulo ``$Eis_k$'':
\begin{equation}
\label{equation2.3}
\sum_{\substack{\ell+m=k\\ \ell,m \geq 1}}
        \ppp^{\ell-1} \qqq^{m-1} E_{\ell,\lambda} E_{m,\mu}
\equiv
\sum_{((a,b),(c,d),(e,f)) \in T} 
 \frac
    {\left(\frac{\ppp}{\AAA}\right)^{k-1} - \left(\frac{\qqq}{\BBB}\right)^{k-1}}
    {\ppp\BBB - \qqq\AAA}.
\end{equation}
Taking simultaneous cyclic permutations of $(\ppp,\qqq,\rrr)$ and $(\AAA,\BBB,\CCC)$, we
obtain similar (purely formal) expressions for the second and third terms
of~\eqref{equation2.1}.  But now a miracle occurs: the identities
$\ppp+\qqq+\rrr=0$ and $\AAA+\BBB+\CCC=0$ imply that
\begin{equation}
\label{equation2.4}
\ppp\BBB-\qqq\AAA = \qqq\CCC-\rrr\BBB = \rrr\AAA-\ppp\CCC.  
\end{equation}
(An amusing way to avoid verifying the above fact algebraically is to
stick to real variables, and use the cross product in $\R^3$: in that
case, the vectors $(\ppp,\qqq,\rrr)$ and $(\AAA,\BBB,\CCC)$ both lie in the plane
orthogonal to $(1,1,1)$, so their cross product is parallel to
$(1,1,1)$, which gives precisely the equalities in~\eqref{equation2.4}.)
Now adding up the cyclic permutations of the expressions on the right
hand side of~\eqref{equation2.3} gives a sum of the cyclic
permutations of 
$\left(\frac{\ppp}{\AAA}\right)^{k-1} - \left(\frac{\qqq}{\BBB}\right)^{k-1}$ over
the \textbf{same} common denominator, and this immediately gives the
desired sum of zero!
\end{proof}

The identities between rational functions that we have used in the
above two fake proofs, namely $1/\AAA\BBB + 1/\BBB\CCC + 1/\CCC\AAA = 0$ and the
analogous cyclic sum involving also $\ppp,\qqq,\rrr$ in higher weight, date
back to Eisenstein; a good reference for this is Chapter~II,
section~2 and Chapter~IV, section~1 of~\cite{WeilEllipticFns} (but
note that Weil uses $r=p+q$, whereas we use $\CCC=-\AAA-\BBB$).  The
identities there, which are proved by a partial fraction decomposition
and/or successive differentiation, may look more complicated than
ours, particularly since they involve various sums and binomial
coefficients.  The relation to the identities we used above with auxiliary
variables $\ppp,\qqq,\rrr$ are however straightforward: write $\rrr =
-\ppp-\qqq$ in our identities, expand everything into a polynomial in
$\ppp$ and $\qqq$, then equate the coefficients of the same monomial
$\ppp^\ell \qqq^m$ on both sides to obtain the identities
in~\cite{WeilEllipticFns}.  However, those identities, just like ours,
always end up involving some terms with only a first power or square
of $\AAA$ (or $\BBB$, or $\CCC$) in the denominator.  This appears to
make the convergence difficult to control, even if one sums both sides in
Eisenstein style, with a sum $\sum_{(m,n) \in \Z^2}$ being carried out
as $\lim_{M \to \infty} \sum_{m=-M}^M \lim_{N \to \infty}
\sum_{n=-N}^N$.  For that reason, the treatment in Chapter~IV
of~\cite{WeilEllipticFns}, following Eisenstein, proceeds by a
somewhat different route.

\section{An example related to Hecke operators}
\label{section3}

Another result of interest relates to the trace from
$\Gamma(NM)$ to a lower level $\Gamma(M)$ of certain products of
Eisenstein series.  The basic expression for which we derive an
identity %(again modulo $Eis_k$)
in~\cite{KKMWR}
is a sum of the form
$\sum_{\tau \in N^{-1}\Z^2/\Z^2}
 E_{\ell,\lambda+\tau} E_{m, \mu-S\tau}$, as explained in Section~4
 of that article.  
Here $\lambda,\mu \in M^{-1}\Z^2/\Z^2$, and $S \in \Z$.  The variable 
$\tau \in N^{-1}\Z^2/\Z^2$ in the sum can be thought of as a sum over
all $N$-torsion points of the elliptic curve given analytically as
$\C/(\Z z + \Z)$.  In this note, we will only deal with one example,
but the result holds generally, as does the fake proof, in whatever
sense fake proofs can be said to hold.  The combinatorics are
again reminiscent of the combinatorics one obtains when one computes
Hecke operators on spaces of modular symbols, and involve sublattices
of $\Z^2$ and the convex hull of the lattice points in the first
quadrant; the references for this are Theorem~3.16 of~\cite{BG2} and
its proof, Lemma~7.3 of~\cite{BG3}, and Subsection~2.3 and Section~3
of~\cite{Merel}.  Here we will just illustrate these phenomena for the
case $N=5$ and $S=3$, and make the connection with fake proofs based
on interesting identities of rational functions.

We thus fix $\lambda,\mu \in \Q^2$ (where usually only their image in
$\Q^2/\Z^2$ matters).  The exact level of $\lambda,\mu$, i.e., their
denominator, $M$ is immaterial, since the formulas we obtain are
insensitive to $M$.  We then consider just the following identity,
which is a special case of Proposition~4.1 of~\cite{KKMWR} (and the
notation $L_{\lambda,\mu,\ppp,\qqq}$ is taken from there as well).

\begin{proposition}
\label{proposition3.1}
Write
$L_{\lambda,\mu,\ppp,\qqq} = \sum_{\substack{\ell+m=k\\ \ell,m \geq 1}}
        \ppp^{\ell-1} \qqq^{m-1} E_{\ell,\lambda} E_{m,\mu}$
for the expression in weight~$k$ that has appeared repeatedly in
Section~\ref{section2}.  Then, modulo ``$Eis_k$'' as usual, we have
\begin{equation}
\label{equation3.1}
\begin{split}
(1/5) & \sum_{\tau \in 5^{-1}\Z^2/\Z^2}
                L_{\lambda + \tau, \mu - 3\tau, \ppp, \qqq}\\
&\equiv
L_{5\lambda,3\lambda + \mu,5\ppp, 3\ppp+\qqq}
+ L_{3\lambda+\mu, \lambda+2\mu, 3\ppp+\qqq, \ppp+2\qqq}
+ L_{\lambda+2\mu, 5\mu, \ppp+2\qqq, 5\qqq}.\\
\end{split}
\end{equation}
\end{proposition}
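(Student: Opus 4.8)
The plan is to imitate the fake proofs of Sections~\ref{section1}--\ref{section2}: first replace every $L$ by its closed rational form, then reindex the torsion sum on the left as a single sum over an enlarged lattice, and finally reproduce that same sum from the right-hand side by a telescoping collapse made possible by a ``common denominator'' miracle in the spirit of~\eqref{equation2.4}. Throughout I would, as usual, ignore convergence and work modulo ``$Eis_k$''. First I would record the closed form behind~\eqref{equation2.3}, now with arbitrary parameters and no triple constraint:
\[
L_{\lambda,\mu,\ppp,\qqq}\equiv\sum_{\substack{(a,b)\equiv\lambda\\(c,d)\equiv\mu}}\frac{(\ppp/\AAA)^{k-1}-(\qqq/\BBB)^{k-1}}{\ppp\BBB-\qqq\AAA},\qquad \AAA=az+b,\ \BBB=cz+d,
\]
which follows by summing the finite geometric series in $\ell,m$ exactly as in Section~\ref{section2} (the cases $k=2,3$ are one-line checks).

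Next I would reindex the left-hand side. Applying this formula to each $L_{\lambda+\tau,\mu-3\tau,\ppp,\qqq}$ and collecting the sum over $\tau\in 5^{-1}\Z^2/\Z^2$, the pair $((a,b),(c,d))$ runs exactly once over the set $\mathcal{L}$ of pairs satisfying $5(a,b)\equiv 5\lambda$ and $3(a,b)+(c,d)\equiv 3\lambda+\mu \pmod{\Z^2}$: the class $\tau$ is forced to be that of $(a,b)-\lambda$, it must lie in $5^{-1}\Z^2/\Z^2$ (the first condition) and it must send $\mu-3\tau$ to the class of $(c,d)$ (the second). Hence the left-hand side becomes $\frac{1}{5}\sum_{\mathcal{L}}\big[(\ppp/\AAA)^{k-1}-(\qqq/\BBB)^{k-1}\big]/(\ppp\BBB-\qqq\AAA)$.

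The heart of the argument is to produce this same sum from the three $L$'s on the right. Their parameters are governed by the determinant-$5$ matrices $\stwomatr{5}{0}{3}{1}$, $\stwomatr{3}{1}{1}{2}$, $\stwomatr{1}{2}{0}{5}$, whose rows run through $(5,0),(3,1),(1,2),(0,5)$ --- the convex-hull/Stern--Brocot data attached to $N=5,\ S=3$ that also governs the modular-symbol combinatorics cited above. In the $i$-th term I would substitute the linear forms and parameters dictated by the $i$-th matrix: send $(\AAA,\BBB)$ to $(5\AAA,3\AAA+\BBB)$, $(3\AAA+\BBB,\AAA+2\BBB)$, $(\AAA+2\BBB,5\BBB)$ and $(\ppp,\qqq)$ to $(5\ppp,3\ppp+\qqq)$, $(3\ppp+\qqq,\ppp+2\qqq)$, $(\ppp+2\qqq,5\qqq)$. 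A covolume count then shows each substitution carries $\mathcal{L}$ bijectively onto the index set of the corresponding $L$: the block map has determinant $25$ and sends the covolume-$1/25$ lattice underlying $\mathcal{L}$ onto $\Z^2\times\Z^2$, while the base point $(\lambda,\mu)$ maps to the correct center. After substitution the $i$-th summand becomes $\big(R_{i-1}^{\,k-1}-R_i^{\,k-1}\big)/\big(5(\ppp\BBB-\qqq\AAA)\big)$, where $R_0,\dots,R_3$ is the chain
\[
\frac{\ppp}{\AAA},\qquad \frac{3\ppp+\qqq}{3\AAA+\BBB},\qquad \frac{\ppp+2\qqq}{\AAA+2\BBB},\qquad \frac{\qqq}{\BBB}.
\]
The miracle, the present analog of~\eqref{equation2.4}, is that in all three terms the denominator $\ppp'\,Y-\qqq'\,X$ collapses to the single value $5(\ppp\BBB-\qqq\AAA)$, while the sharing of rows between consecutive matrices makes the upper ratio of one summand equal the lower ratio of the next. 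Summing over $\mathcal{L}$ therefore telescopes to $\big(R_0^{\,k-1}-R_3^{\,k-1}\big)/\big(5(\ppp\BBB-\qqq\AAA)\big)$ summed over $\mathcal{L}$, which is exactly the reindexed left-hand side.

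The main obstacle --- beyond the convergence that this fake proof cheerfully discards --- is the bookkeeping of the three lattice bijections: one must verify that each determinant-$5$ substitution really carries the single coset $\mathcal{L}$ bijectively onto the index set of the matching $L$, and check the denominator collapse term by term. Identifying the intermediate vertices $(3,1)$ and $(1,2)$ is precisely the continued-fraction/convex-hull step, and it is this combinatorial input, rather than any new algebraic identity, that would change with $N$ and $S$ in the general case.
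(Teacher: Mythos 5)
Your proposal is correct (at the paper's own level of fake rigor) and follows essentially the same route as the paper's proof: the same closed geometric-series form as in~\eqref{equation2.3}, the same reindexing of the torsion sum over the lattice of pairs $(v',w')$ with the consecutive convex-hull vectors giving bijections onto $\Z^4$, and the same collapse of all three denominators to $5(\ppp\BBB-\qqq\AAA)$ with telescoping numerators, exactly as in~\eqref{equation3.4}. The only differences are cosmetic: you go directly to general $k$ instead of warming up with $k=2$ as the paper does, and you justify the lattice bijections by a covolume count where the paper simply asserts them.
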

\begin{remark}
\label{remark3.2}
The vectors $(5,0)$, $(3,1)$, $(1,2)$, and $(0,5)$ in sequence are
obtained by taking the convex hull of the nonzero points in the first
quadrant of the sublattice 
$\{(x,y) \mid x-3y \equiv 0 \pmod{5}\}$ of $\Z^2$,
as in the figure below.  Any pair of consecutive vectors has a
determinant of $5$, the index.  This is described further in the
references mentioned above.
\end{remark}
\includegraphics[scale=0.9, trim=0mm 13mm 0mm 20mm, clip]{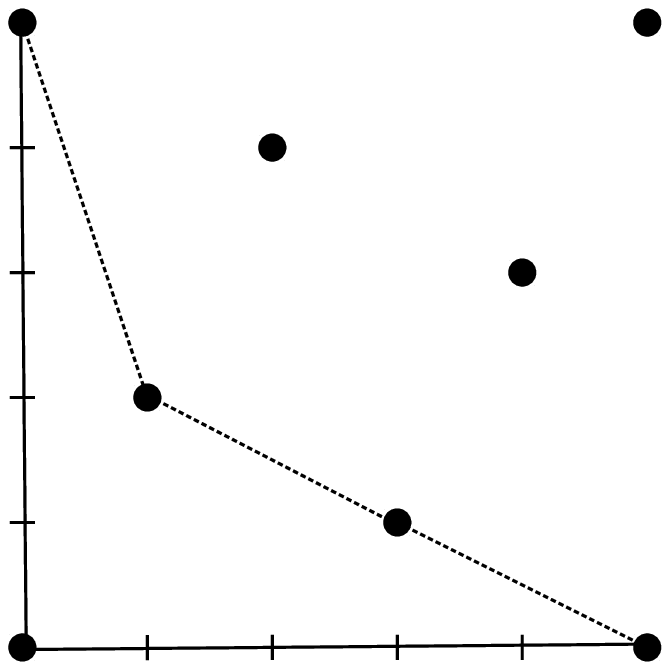}
%editing of the eps file: on line 60, ``0 lw'' changed to ``30 lw'' to
%increase the line width.
%scaled the image to 90% to go from 6.1 pages to 6 pages total
%arguments for trim (which crops the margin): left bottom right top 
%using the trim argument requires us to also include ``clip''
\begin{proof}[Fake proof of Proposition~\ref{proposition3.1}]
It is clearer if we first restrict to $k=2$, in which case
everything related to $\ppp,\qqq$ can be omitted, and
$L_{\lambda,\mu,\text{anything}} = E_{1,\lambda}E_{1,\mu}$.  We
formally expand $\sum_\tau E_{1,\lambda+\tau}E_{1,\mu-3\tau}$
using as usual $v = (a,b) \equiv \lambda+\tau$ and 
$w = (c,d) \equiv \mu-3\tau \pmod{\Z^2}$, taking into account also the
sum over $\tau$.  Write as usual $\AAA = az+b$ and $\BBB = cz+d$;
these depend linearly on $v$ and $w$.  Our left hand side is thus the
sum of all terms $1/(5\AAA\BBB)$, as $(v,w) = (\lambda+v',\mu+w') \in
\Q^4$ ranges over all possible shifts of $(\lambda,\mu)$ by the
lattice $\Lambda = \{(v',w') \in 5^{-1}\Z^4 \mid 3v'+w' \in \Z^4\}$. 

For each two consecutive vectors in the list $\{(5,0), (3,1), (1,2),
(0,5)\}$, say for example the vectors $(3,1)$ and $(1,2)$, one can
see that as $(v',w')$ varies over $\Lambda$, the resulting combination
$(3v'+w', v'+2w')$ (made using the coefficients of the two consecutive
vectors) varies precisely over all of $\Z^4$.  It follows that the
pair of values $(3\AAA+\BBB, \AAA+2\BBB)$ varies over the
terms in such a way that the (nonconvergent, as always) sum of all the
products $(3\AAA+\BBB)^{-1}(\AAA+2\BBB)^{-1}$ yields
$E_{1,3\lambda+\mu}E_{1,\lambda+2\mu}$.  It now remains to make use of the
hopefully impressive identity
\begin{equation}
\label{equation3.2}
\frac{1}{5\AAA\BBB}
   = \frac{1}{(5\AAA)(3\AAA+\BBB)}
     + \frac{1}{(3\AAA+\BBB)(\AAA+2\BBB)}
     + \frac{1}{(\AAA+2\BBB)(5\BBB)}
\end{equation}
to conclude the fake proof for $k=2$.  In all this, we have blithely
ignored the fact that in all our sums, we omitted any terms that 
look like $1/0$, which may have introduced correction terms that with
luck will belong to $Eis_2$; as mentioned at the end of
Section~\ref{section1}, however, the issues with convergence seem to
produce further unavoidable corrections from $Eis_2$, even if the
above formal argument has not omitted any terms (e.g., if all of
$5\lambda,3\lambda+\mu,\lambda+2\mu,5\mu$ are nonzero in $\Q^2/\Z^2$).

We note that the identity~\eqref{equation3.2} may become more apparent if we
observe that
\begin{equation}
\label{equation3.3}
\begin{split}
 \frac{1}{(5\AAA)(3\AAA+\BBB)}
&= \frac{1/\BBB}{5\AAA} - \frac{3/(5\BBB)}{3\AAA+\BBB},\\
\frac{1}{(3\AAA+\BBB)(\AAA+2\BBB)}
&= \frac{3/(5\BBB)}{3\AAA+\BBB} - \frac{1/(5\BBB)}{\AAA+2\BBB},\\
\frac{1}{(\AAA+2\BBB)(5\BBB)}
&= \frac{1/(5\BBB)}{\AAA+2\BBB} - \frac{0}{5\BBB},\\
\text{generally, }
\frac{1}{(a\AAA+b\BBB)(c\AAA+d\BBB)}
&= \frac{a/((ad-bc)\BBB)}{(a\AAA+b\BBB)}
  - \frac{c/((ad-bc)\BBB)}{(c\AAA+d\BBB)}.
\end{split}
\end{equation}
Thus when we add the terms coming from each consecutive pair vectors
in the list $\{(N,0), \dots, (0,N)\}$, all the middle terms cancel,
and we are left with $1/(N\AAA\BBB)$.

We now turn to the fake proof for arbitrary weight~$k$.  For general $k$,
the sum giving the left hand side of~\eqref{equation3.1},
$(1/5)\sum_\tau L_{\lambda+\tau,\mu-3\tau,\ppp,\qqq}$, is a
sum not merely of $1/(5\AAA\BBB)$, but rather, as
in~\eqref{equation2.3}, of  
$[(\ppp/\AAA)^{k-1} - (\qqq/\BBB)^{k-1}]/[5(\ppp\BBB - \qqq\AAA)]$.
The sum runs over the same collection of $\AAA,\BBB$ as before, and the
same type of combinations (using consecutive vectors in the list
$\{(5,0), \dots\}$) relate the lattice $\Lambda$ to $\Z^4$,
specifically for the terms appearing on the right hand side
of~\eqref{equation3.1}.  One obtains that the three $L$ 
expressions there amount to summing each of the following three terms:
\begin{equation}
\label{equation3.4}
\begin{split}
&\frac{(5\ppp/5\AAA)^{k-1} - \bigl((3\ppp+\qqq)/(3\AAA+\BBB)\bigr)^{k-1}}
     {(5\ppp)(3\AAA+\BBB) - (3\ppp+\qqq)(5\AAA)},
\\
&\frac{\bigl((3\ppp+\qqq)/(3\AAA+\BBB)\bigr)^{k-1}
        - \bigl((\ppp+2\qqq)/(\AAA+2\BBB)\bigr)^{k-1}}
     {(3\ppp+\qqq)(\AAA+2\BBB) - (\ppp+2\qqq)(3\AAA+\BBB)},
\\
&\frac{\bigl((\ppp+2\qqq)/(\AAA+2\BBB)\bigr)^{k-1} - (5\qqq/5\BBB)^{k-1}}
     {(\ppp+2\qqq)(5\BBB) - (5\qqq))((\AAA+2\BBB))}.
\\
\end{split}
\end{equation}
Once again, a minor miracle occurs in that the denominators of the
terms are all equal to the same expression, namely
$5(\ppp\BBB - \qqq\AAA)$, so we formally get the desired left hand
side.  The same phenomenon happens in general, not just for $N=5$.
\end{proof}

% Cohen, Pasol, Merel, Choie, Kohnen, Glenn Stevens, Zagier

%    Bibliographies can be prepared with BibTeX using amsplain,
%    amsalpha, or (for "historical" overviews) natbib style.
%\bibliographystyle{amsplain}
\bibliographystyle{amsalpha}
%    Insert the bibliography data here.
\bibliography{sarajevo2}

\end{document}